\numberwithin{equation}{section}
\definecolor{citegreen}{rgb}{0,0.6,0}
\definecolor{refred}{rgb}{0.8,0,0}
\newcommand{\R}{\mathbb{R}}
\def\HHH{{\rm H}}
\def\RRR{{\mathrm R}}
\def\a{\alpha}
\def\b{\beta}
\newcommand{\pa}{\partial}
\newcommand{\ep}{\varepsilon}
\newcommand{\mcrit}{m_{{\rm crit}}}
\newcommand{\Ric}{{\rm Ric}}
\newcommand{\D}{{\nabla}}
\newcommand{\DD}{{\nabla^2}}
\newcommand{\De}{\Delta}
\newcommand{\na}{\nabla}
\newcommand{\hhh}{{\rm h}}
\mathchardef\emptyset="001F
\definecolor{vgreen}{rgb}{0.1,0.5,0.2}
\definecolor{viola}{RGB}{85,26,139}
\newtheorem{theorem}{Theorem}[section]
\newtheorem{remark}{Remark}
\newtheorem{definition}{Definition}
\newtheorem{proposition}[theorem]{Proposition}
\begin{document}

\hyphenation{ma-ni-fold ma-ni-folds de-ge-ne-ra-te asymp-to-tic}

\title
[Static Black Hole Uniqueness for nonpositive masses]{Static Black Hole Uniqueness for nonpositive masses}

\author[Stefano~Borghini]{Stefano Borghini}
\address{S.~Borghini, Uppsala Universitet, L\"{a}gerhyddsv\"{a}gen 1, 752 37 Uppsala, Sweden}
\email{stefano.borghini@math.uu.se}





\begin{abstract}
In~\cite{Lee_Nev}, Lee and Neves proved a Penrose inequality for spacetimes with negative cosmological constant and nonpositive mass aspect. As an application,
they were able to obtain a static uniqueness theorem for Kottler spacetimes with nonpositive mass.
In this paper, we propose an alternative more elementary proof of this static uniqueness result and we discuss some possible generalizations, most notably to degenerate horizons.
\end{abstract}

\maketitle

\noindent\textsc{MSC (2010): 
35B06,
\!53C21,
\!83C57,
}

\smallskip
\noindent\keywords{\underline{Keywords}:  Static metrics, Kottler solution, Black Hole Uniqueness Theorem.} 

\date{\today}

\maketitle

\section{Introduction}



Static spacetimes represent the time independent solutions of the Einstein field equations, and as such they are some of the fundamental models in general relativity. It is then a natural problem to ask whether it is possible to classify them, starting from the vacuum case.
If the cosmological constant $\Lambda$ is equal to zero, static vacuum spacetimes are well understood: the celebrated Black Hole Uniqueness Theorem, proven in~\cite{Bun_Mas} (see also~\cite{Israel,Robinson,ZHa_Rob_Sei} for earlier contributions and~\cite{Ago_Maz_2} for a recent alternative approach) states that the only asymptotically flat static vacuum spacetimes are the Schwarzschild black hole and the Minkowski space. If one drops the assumption of asymptotic flatness, then other static vacuum spacetimes are known, namely the boosts and the Myers/Korotkin-Nicolai black holes. Characterizations of these solutions have been shown in the papers~\cite{Reiris_claI,Reiris_claII,Rei_Per}.


Such strong results are not available when $\Lambda\neq 0$. If the cosmological constant is positive, there are some characterizations of the known model solutions under some additional geometric hypotheses, see for instance~\cite{Ambrozio,Kobayashi,Lafontaine}.
In~\cite{Bor_Maz_2-I,Bor_Maz_2-II} the notion of {\em pseudo-radial function} was introduced and it was shown how to effectively exploit it to perform a comparison with the model solutions, ultimately leading in~\cite{Bor_Chr_Maz} to a uniqueness result for the Schwarzschild--de Sitter spacetime.
Concerning the negative cosmological constant case, a characterization of the Anti de Sitter solution has been provided in~\cite{Bou_Gib_Hor} and it has then been improved in~\cite{Chr_Her,Qing,Wang_2}, whereas a static uniqueness theorem for the AdS Soliton has been shown in~\cite{Gal_Sur_Woo}. Finally, Lee and Neves in~\cite{Lee_Nev} proved a Penrose inequality for asymptotically hyperbolic manifolds and exploited it to prove a uniqueness result for Kottler solutions with nonpositive mass. 

In this paper we will show an alternative proof of this latter static uniqueness result. While the proof in~\cite{Lee_Nev} is based on the Penrose inequality, our argument is more elementary and it will rely on the notion of pseudo-radial function. As already mentioned, this function has been introduced in~\cite{Bor_Maz_2-II} for the purpose of studying static spacetimes with positive cosmological constant, but it can be easily adapted to the $\Lambda<0$ case, as we will discuss in more details in Section~\ref{sec:pr}.

Let us start by quickly recalling our setting. A {\em vacuum spacetime with cosmological constant $\Lambda$} is a $4$-dimensional Lorentzian manifold $(X,\gamma)$ satisfying the Einstein field equations $\Ric_\gamma\,=\,\Lambda\,\gamma$. Since we will only be concerned with the negative cosmological constant case, for the sake of simplicity we fix its value once and for all: up to a normalization of $\gamma$ we suppose $\Lambda=-3$.
We further assume that our spacetime is {\em static}, meaning that $(X,\gamma)$ has the following structure
\begin{equation}
\label{eq:spacetime}
X\,=\,\R\,\times\,M\,,\qquad \gamma\,=\,-u^2\,dt\otimes dt\,+\,g\,,
\end{equation}
where $(M,g)$ is a $3$-dimensional Riemannian manifold and $u:M\to\R$ is a smooth function, usually referred to as {\em lapse function} or {\em static potential}. It is also physically reasonable to ask for the function $u$ to be positive in the interior of $M$, with $u=0$ on $\pa M$, if nonempty. Finally, it is common to assume that $\pa M$ is compact and that the metric $g$ is smooth in $M$ and well defined on $\pa M$. In this framework, the Einstein field equations $\Ric_\gamma=-3\,\gamma$ translate into the following PDE problem for the static potential
\begin{equation}
\label{eq:pb}
\begin{dcases}
u\,\mbox{Ric}\,=\,\D^2 u\,-\,3 \,u\,g\,, & \mbox{in } M\\
\ \ \,\Delta u\,=\,3\,u\,, & \mbox{in } M\\
\ \ \ \ \ \,u=0, & \mbox{on } \partial M\\
\ \ \ \ \ \,u>0, & \mbox{in } \mathring{M}
\end{dcases}
\end{equation}
where $\mbox{Ric},\D,\Delta$ are the Ricci tensor, the Levi-Civita connection and the Laplace-Beltrami operator of the metric $g$. For ease of reference, let us sum up the properties that we expect from $M$, $g$ and $u$ in the following definition.

\begin{definition}
\label{def:static_solution} 
A {\em static solution with cosmological constant $\Lambda=-3$} is a triple $(M,g,u)$ where $M$ is a connected smooth $3$-dimensional Riemannian manifold with compact boundary $\pa M$ (possibly empty), $g$ is a complete smooth Riemannian metric on $M$ and $u:M\to\R$ is a smooth function solving~\eqref{eq:pb}. Two static solutions $(M,g,u)$ and $(M',g',u')$ are said to be {\em isometric} if there is a diffeomorphism $\phi:M\to M'$ such that $\phi^* g'=g$ and $u'\circ\phi = c\, u$, for some constant $c>0$.  
\end{definition}

Some remarks are in order. First of all, taking the trace of the first equation in~\eqref{eq:pb} and using the second one, one immediately shows that the scalar curvature $\RRR$ of a static solution is constant, and more precisely that it holds
$$
\RRR\,\,=\,\,-\,6\,.
$$ 
Furthermore, since $u=0$ on $\pa M$, again from the first equation we deduce that the hessian $\DD u$ vanishes pointwise on $\pa M$. Starting from this, standard arguments (see for instance~\cite{Ambrozio,Hij_Mon}) allow to show that the components of $\pa M$ are totally geodesic and $|\D u|$ is constant and positive on each of them.  A connected component $S$ of $\pa M$ is usually called {\em horizon} and the constant value of $|\D u|$ on $S$ is referred to as the {\em surface gravity} of $S$. Finally, we observe that $M$ cannot be compact, otherwise, since
$\De u=3u\geq 0$ in $M$, the Strong Maximum Principle would tell us that $u$ attains its maximum on the boundary of $M$, in clear contrast with~\eqref{eq:pb}. 
It follows that $M$ must have at least one end. We will need the following standard hypothesis on the behavior of the triple $(M,g,u)$ near infinity:

\begin{definition}
Let $i\geq 2$ be an integer. A static solution $(M,g,u)$ is said to be {\em $\mathscr{C}^i$-compactifiable} if the following properties hold:
\smallskip
\begin{itemize}
\item there is a $\mathscr{C}^{i+1}$-diffeomorphism between $M\setminus\pa M$ and the interior of a smooth compact manifold $\overline{M}$ with $\pa\overline{M}=\pa M\sqcup \pa M_\infty$,

\smallskip

\item  the function $u^{-1}$ extends to a $\mathscr{C}^i$ function in a collar neighborhood of $\pa M_\infty$ in $\overline{M}$ in such a way that $(u^{-1})_{|_{\pa M_\infty}}=0$ and $d(u^{-1})_{|_{\pa M_\infty}}\neq 0$,
\smallskip
\item the metric $u^{-2}g$ extends to a $\mathscr{C}^i$ metric in a collar neighborhood of $\pa M_\infty$ in $\overline{M}$.
\end{itemize} 
We will denote by $\hat g$ the metric induced by $u^{-2}g$ on $\pa M_\infty$. The pair $(\pa M_\infty,\hat g)$ is called the {\em conformal infinity} of $(M,g,u)$.
\end{definition}

It is known from~\cite[Theorem~I.1]{Chr_Sim} that the conformal infinity $\pa M_\infty$ of a $\mathscr{C}^3$-conformally compactifiable static solution is always {\em connected}. We will be mostly concerned with the case in which the metric $\hat g$ has constant sectional curvature $\kappa_{\hat g}$. Under this additional hypothesis, up to a normalization of $u$, it is not restrictive to assume that $\kappa_{\hat g}=+1$, $0$ or $-1$, depending on whether the genus of the surface $\pa_{\infty} M$ is equal to $0$, equal to $1$ or greater than $1$, respectively.

\begin{figure}
\centering
\begin{subfigure}
\centering
\includegraphics[scale=0.28]{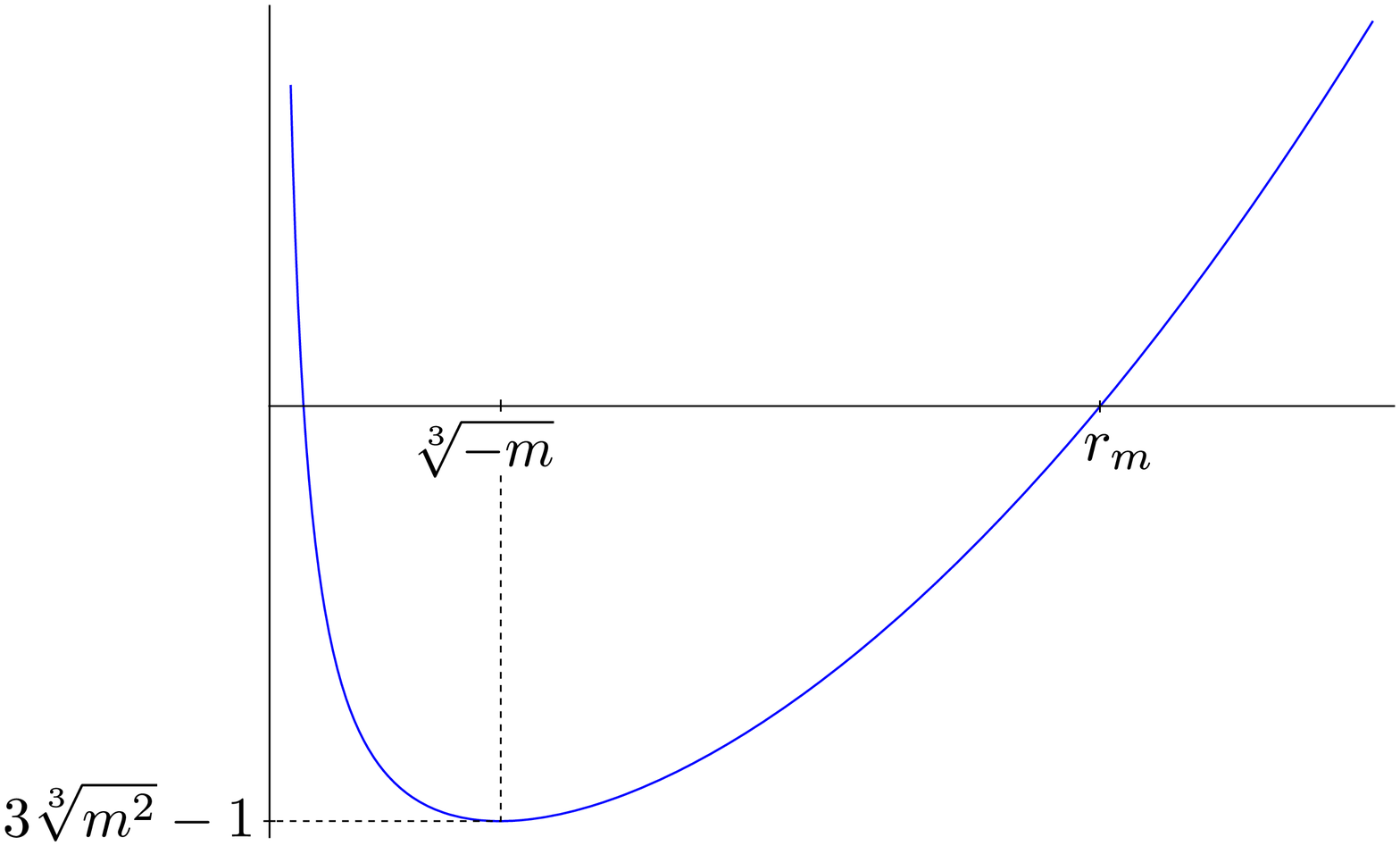}
\end{subfigure}%
\qquad\quad
\begin{subfigure}
\centering
\includegraphics[scale=0.28]{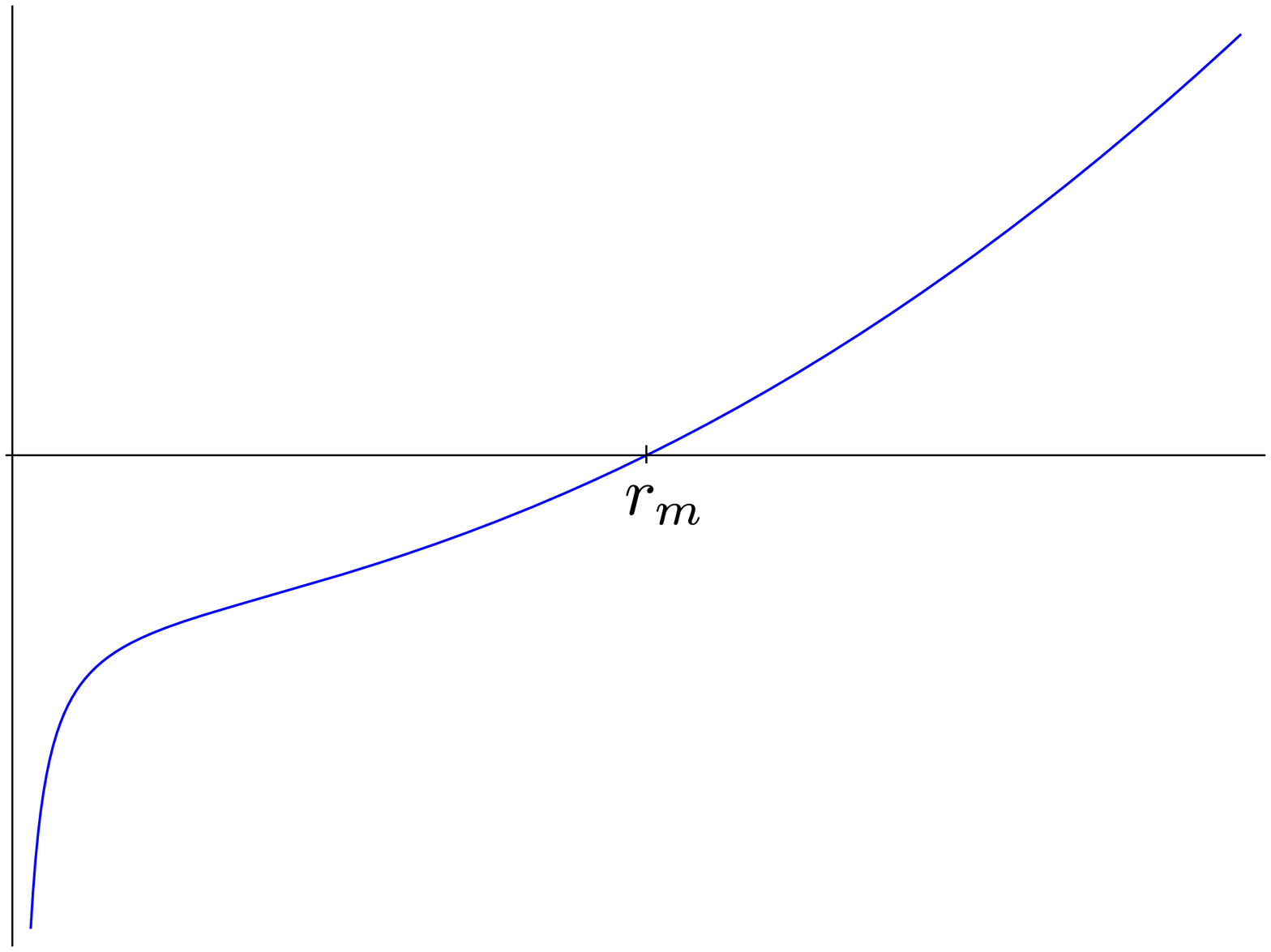}
\end{subfigure}
\caption{\small Plot of $-1+r^2-2m/r$ as a function of $r$ for $m<0$ (left) and for $m>0$ (right). Notice that, for negative values of $m$, in order to have a positive zero $r_m$ of the function, one needs $3\sqrt[3]{m^2}-1<0$, which corresponds to $m>-1/(3\sqrt{3})$.}
\label{fig:kottler}
\end{figure}

Among the known solutions to~\eqref{eq:pb}, we are interested in the following family, usually referred to as the {\em Kottler solutions}~\cite{Kottler}
\begin{equation}
\label{eq:kottler}
M\,=\,[r_m,+\infty)\times\Sigma_\kappa\,,\quad
g\,=\,\frac{dr\otimes dr}{\kappa\,+\,\,r^2\,-\,\frac{2m}{r}}+r^2 g_{\Sigma_\kappa}\,,\quad
u\,=\,\sqrt{\kappa\,+\,r^2\,-\,\frac{2m}{r}}\,,
\end{equation}
where $(\Sigma_\kappa,g_{\Sigma_\kappa})$ is a surface with constant sectional curvature $\kappa$ and $r_m>0$ is the positive root of the polynomial $\kappa x+x^3-2m=0$. In order for such a positive solution $r_m$ to exist, one needs to assume that the {\em mass} $m$ is in the appropriate range.
If $\kappa\geq 0$, it is easily seen that, in order for $r_m$ to exist, it is necessary and sufficient to take $m>0$. Let us now focus on the case $\kappa<0$. 
First of all, 
up to a rescaling of the coordinate $r$, we can set $\kappa=-1$.
One can then easily study the plot of the function $u^2=-1+r^2-2m/r$ (see Figure~\ref{fig:kottler}) to show that $r_m$ exists if and only if
$$
m\,\,>\,\,-\,\frac{1}{3\,\sqrt{3}}\,\,.
$$
Notice in particular that, when $\kappa<0$, negative masses are allowed. Let us mention that one can make sense of the Kottler solution with mass equal to the critical value $\mcrit = -1/(3\sqrt{3})$, in which case the horizon $\pa M$ becomes {\em degenerate}. We will postpone the discussion of this special case to Section~\ref{sec:future}.
Finally, for future reference, we write down the formula for the norm of the gradient of the static potential of a Kottler solution:
\begin{equation}
\label{eq:grad_kottler}
|\D u|^2\,=\,\left(\frac{r^3+m}{r^2}\right)^2\,.
\end{equation}
As we will see in the following sections, this quantity plays a central role in the analysis.

%

We are now ready to state the uniqueness result for Kottler solutions with negative mass that we want to prove:

\begin{theorem}[Static Uniqueness for nonpositive masses]
\label{thm:main}
Let $(M,g,u)$ be a $\mathscr{C}^3$-conformally compactifiable static solution with cosmological constant $\Lambda=-3$ and suppose that the conformal infinity $(\pa M_\infty,\hat g)$ has constant sectional curvature equal to $-1$. Let $S$ be an horizon with maximum surface gravity $k$, and suppose that $0<k\leq 1$. If 
$$
{\rm genus}(\pa M_\infty)\,\leq\,{\rm genus}(S)\,,
$$
then $(M,g,u)$ is isometric to a Kottler solution~\eqref{eq:kottler} with nonpositive mass.
\end{theorem}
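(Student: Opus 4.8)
The plan is to adapt the pseudo-radial function of~\cite{Bor_Maz_2-II} to the present $\Lambda<0$ setting and to combine it with a maximum principle and with the Gauss--Bonnet theorem on the level sets of $u$. Motivated by~\eqref{eq:kottler}--\eqref{eq:grad_kottler}, where one checks that $|\D u|=\frac{3r^2-1-u^2}{2r}$ identically, I would define the \emph{pseudo-radial function} $r$ on $(M,g,u)$ as the positive root of $3r^2-2|\D u|\,r-(1+u^2)=0$, so that on a Kottler solution it reduces to the coordinate $r$ in~\eqref{eq:kottler}. The analytic engine is the function $w:=|\D u|^2-u^2$. Using the Bochner formula together with~\eqref{eq:pb} and $\RRR=-6$, a direct computation gives $\tfrac12\De w-\tfrac1{2u}\langle\D w,\D u\rangle=u^2|\mathring{\Ric}|^2$, which I would rewrite in divergence form
\[
\mathrm{div}\!\left(u^{-1}\D w\right)\,=\,2\,u\,\big|\mathring{\Ric}\big|^2\,\ge\,0 ,
\]
where $\mathring{\Ric}=\Ric+2g$ is the trace-free Ricci tensor; thus $w$ is a subsolution of a weighted Laplacian on $\mathring M$.

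The next step is to pin down the boundary values of $w$. On each horizon $u=0$, so $w=|\D u|^2$ equals the square of the surface gravity, hence $w\le k^2\le1$ there by hypothesis. At the conformal infinity I would set $\rho:=u^{-1}$ and $\bg:=u^{-2}g$; since $\RRR=-6$, the standard conformal identity forces $|\D\rho|_{\bg}\to1$ on $\pa M_\infty$, and a refined expansion of~\eqref{eq:pb} together with $\kappa_{\hat g}=-1$ yields $w\to1$ at infinity (on~\eqref{eq:kottler} one indeed has $w=1+4m/r+m^2/r^4\to1$). The maximum principle applied to the displayed inequality then gives the pointwise bound $|\D u|^2\le u^2+1$ on all of $M$.

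For the rigidity I would exploit the geometry of the regular level sets $\Sigma_s=\{u=s\}$. From~\eqref{eq:pb} one computes their mean curvature $H=-u\,\Ric(\nu,\nu)/|\D u|$ with $\nu=\D u/|\D u|$, and inserting this into the Gauss equation and integrating, Gauss--Bonnet produces
\[
\tfrac12\!\int_{\Sigma_s}\!\big|\mathring{\mathrm{II}}\big|^2\,=\,\int_{\Sigma_s}\!\Big(\tfrac{H^2}{4}+\tfrac{H|\D u|}{u}-3\Big)-4\pi\big(1-\mathrm{genus}(\Sigma_s)\big),
\]
where $\mathring{\mathrm{II}}$ is the trace-free second fundamental form; on Kottler the right-hand integrand reduces to $-1/r^2$ and the identity holds with $\mathring{\mathrm{II}}=0$. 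The plan is to promote this into a monotonicity statement for the area-radius $r(s)$ defined by $|\Sigma_s|=4\pi(\mathrm{genus}(\Sigma_s)-1)\,r(s)^2$: combining the pointwise bound $|\D u|^2\le u^2+1$, Cauchy--Schwarz for $\int_{\Sigma_s} H$ and $\int_{\Sigma_s} H^2$, and the genus hypothesis $\mathrm{genus}(\pa M_\infty)\le\mathrm{genus}(S)$ to fix the sign of the Euler-characteristic term, one obtains a differential inequality for $r(s)$ whose extremal case is precisely the Kottler ODE. Matching the endpoints---the horizon value encoded by $k$ and the value at infinity encoded by $\hat g$---forces equality throughout, whence $\mathring{\mathrm{II}}\equiv0$, the level sets have constant curvature, and $(M,g)$ is a warped product, i.e.\ a Kottler solution~\eqref{eq:kottler}. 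Finally, $|\D u|$ on the horizon equals $(r_m^3+m)/r_m^2$, and $0<k\le1$ is equivalent to $m\le0$ (cf.\ Figure~\ref{fig:kottler}), giving the stated range of masses.

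The hard part, I expect, is twofold. First, the analysis at the conformal infinity: one must justify rigorously that $w\to1$ and compute the endpoint of the monotone quantity purely from the datum $\kappa_{\hat g}=-1$, since the metric is only static (not Einstein) and the asymptotics are governed by~\eqref{eq:pb} rather than by an exact model. Second, the global structure of the level sets: one has to exclude interior critical points of $u$ and rule out additional horizons---so that each $\Sigma_s$ is connected with a well-defined genus and Gauss--Bonnet applies cleanly---and, crucially, arrange the integral estimate so that the single inequality $\mathrm{genus}(\pa M_\infty)\le\mathrm{genus}(S)$ delivers rigidity across the \emph{entire} range $m\le0$, rather than only the Einstein case $m=0$ in which $\mathring{\Ric}$ vanishes.
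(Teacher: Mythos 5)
Your Bochner identity is correct---one indeed gets $\mathrm{div}\big(u^{-1}\D(|\D u|^2-u^2)\big)=2u\,|\mathring{\Ric}|^2$ with $\mathring{\Ric}=\Ric+2g$, and $w=|\D u|^2-u^2\to 1$ at infinity when $\kappa_{\hat g}=-1$---but it proves the wrong comparison, and this is the genuine gap. The maximum principle applied to your identity yields $W\le u^2+1$, which is exactly the comparison with the \emph{zero-mass} model (for $m_0=0$ one has $W_0=u^2+1$), and its rigidity case is $\mathring{\Ric}\equiv 0$, which characterizes only $m=0$. On the Kottler solution with $m<0$ one has $w=1+4m/r+m^2/r^4<1$ strictly (at the horizon $w=k^2<1$), so your inequality is \emph{strict} on the very solutions you must characterize, and no equality-case analysis based on it can reach them. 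What the argument actually needs is the sharper estimate $W\le W_0$ against the mass-$m_0$ model singled out by the maximal surface gravity $k$ (Proposition~\ref{pro:grad_est}); in the paper this is imported from the Chru\'sciel--Simon elliptic inequality~\eqref{eq:elliptic} for $W-W_0$, in which the hypothesis $0<k\le 1$ enters precisely to force $m_0\le 0$ and hence a nonpositive zeroth-order coefficient $\alpha$, making the maximum principle applicable. Your simpler identity cannot see the parameter $m_0$ at all; you flag this difficulty yourself in your closing paragraph, but it is not a technical loose end---it is the heart of the matter, and your divergence-form subsolution cannot be patched to deliver it.

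The second half of your plan is likewise not an argument yet, and it diverges from where the genus hypothesis actually does its work. You cannot foliate by regular level sets $\Sigma_s$ and run a monotonicity for an area-radius before excluding critical points of $u$: in the paper the absence of critical points is a \emph{consequence} of the rigidity $W\equiv W_0>0$, established at the end, not an input. Moreover, the genus hypothesis enters not through Gauss--Bonnet on interior level sets but through two boundary ingredients tied together by a single flux identity: the near-horizon expansion of $W-W_0$ gives the area bound $|S|\ge 4\pi r_{m_0}^2\,[{\rm genus}(S)-1]$ (Proposition~\ref{pro:area_bound}), and the divergence identity~\eqref{eq:nonnegative_div}---whose bulk term is $\tfrac{3u\Psi^4}{(\Psi^3+m_0)^3}(W_0-W)\ge 0$, again requiring the sharp mass-$m_0$ gradient bound---transports this to conformal infinity, where Gauss--Bonnet with $\kappa_{\hat g}=-1$ closes the chain and forces $W\equiv W_0$ (Propositions~\ref{pro:mono} and the proof of Theorem~\ref{thm:main_rewr}). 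Note also that the paper's pseudo-radial function $\Psi$ is a function of $u$ \emph{alone}, defined implicitly by $u^2=-1+\Psi^2-2m_0/\Psi$, which is what makes $\D\Psi$ proportional to $\D u$ (formula~\eqref{eq:grad_psi}) and the vector field $\D u/(\Psi^3+m_0)$ have a sign-definite divergence; your $r$, defined through $|\D u|$ as the root of $3r^2-2|\D u|\,r-(1+u^2)=0$, lacks this property and in fact plays no role in your subsequent outline. To repair the proposal you would have to replace the $w$-identity by the $m_0$-dependent comparison $W\le W_0$ (e.g.\ via~\eqref{eq:elliptic}) and then run the flux computation of Proposition~\ref{pro:mono}; the final warped-product step you sketch ($\mathring{\mathrm{II}}\equiv 0$ implies Kottler) does match the paper's endgame.
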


\begin{remark}
\label{rem:main}
Notice that this result is slightly stronger than the one proved by Lee and Neves in~\cite[Theorem~2.1]{Lee_Nev}, as we only need $\mathscr{C}^3$-compactifiability instead of $\mathscr{C}^5$. It is also worth mentioning that, as we will discuss in more details in Section~\ref{sec:future}, our proof can be adapted to treat the degenerate case $k=0$ as well, leading to a characterization of the critical Kottler solution.
\end{remark}

The paper is structured as follows. In Section~\ref{sec:LN} we will recall the main steps in the proof of Lee and Neves. This will give us the chance to introduce a couple of important preliminary results, namely Proposition~\ref{pro:grad_est} and Proposition~\ref{pro:area_bound}. These two propositions, proven by Chru\'sciel and Simon in~\cite{Chr_Sim}, will play an important role in our proof.
In Section~\ref{sec:pr} we will define the pseudo-radial function $\Psi$, which will then be exploited in the computations in Section~\ref{sec:proof}, leading to the proof of Theorem~\ref{thm:main}. Section~\ref{sec:future} is devoted to the discussion of possible extensions and generalizations. In Subsection~\ref{sub:degenerate} we will discuss how to adapt our proof to include the case of degenerate horizons, whereas in Subsection~\ref{sub:relax_hyp} we will comment on the hypotheses of Theorem~\ref{thm:main}, showing that we can relax some of them and still obtain some partial results. Finally, in Subsection~\ref{sub:generalizations} we briefly discuss some further generalizations and possible future directions.

\section{The proof of Lee and Neves}
\label{sec:LN}

In this section we recall the main steps in the proof of the static uniqueness for Kottler solutions provided in~\cite{Lee_Nev} by Lee and Neves.
Let $(M,g,u)$ be a static solution with cosmological constant $\Lambda=-3$ and suppose that the maximum $k>0$ of the surface gravities of its horizons is less than or equal to $1$. Then one can easily check that there is exactly one value $m_0\leq 0$ such that the horizon of the Kottler solution $(M_0,g_0,u_0)$ with mass $m_0$ has surface gravity equal to $k$. We want to compare our general solution $(M,g,u)$ with the Kottler solution $(M_0,g_0,u_0)$.
Two functions that will be crucial for said comparison are $W=|\D u|^2$ and $W_0:M\to \R$, representing the corresponding quantity on the reference model $(M_0,g_0,u_0)$. Namely, given $p\in M$, the number $W_0(p)$ is equal to the value of $|\D u_0|_{g_0}^2$ on the level set $\{u_0=u(p)\}\subseteq M_0$. Notice that the function $W_0$ is well defined, since $|\D u_0|_{g_0}^2$ is constant on each level set of $u_0$. Chru\'sciel and Simon in~\cite{Chr_Sim}, following a previous calculation in~\cite{Bei_Sim}, show that the quantity $W-W_0$ satisfies an elliptic inequality of the form
\begin{equation}
\label{eq:elliptic}
\De(W-W_0)\,+\,\langle \xi\,|\,\D(W-W_0)\rangle\,+\,\alpha(W-W_0)\,\geq\,0
\end{equation}
on any domain $\Omega\subseteq M$ not containing any critical points of $u$,
where $\xi$ is smooth and $\alpha$ has the same sign as $m_0$.  See also~\cite[Lemma~4.1]{Lee_Nev} for more details on the computations.
Notice in particular that, since we are assuming $m_0\leq 0$, the function $\alpha$ is nonpositive.
Furthermore $W_0$ has been defined in such a way that $W=W_0$ on the horizon with maximum surface gravity and $W\leq W_0$ on eventual other horizons, hence $W\leq W_0$ on $\pa M$. Finally, under the assumption that $(M,g,u)$ is conformally compact and the conformal infinity has constant curvature equal to $-1$, Chru\'sciel and Simon computed the asymptotic behavior of $W-W_0$, showing in particular that $W-W_0\to 0$ at infinity. As a consequence of the Maximum Principle, one obtains the following
\begin{proposition}
\label{pro:grad_est}
Let $W$, $W_0$ be defined as above. In the hypotheses of Theorem~\ref{thm:main}, on the whole $M$ it holds
\begin{equation}
\label{eq:W_leq_Wo}
W\,\,\leq\,\,W_0\,.
\end{equation}
\end{proposition}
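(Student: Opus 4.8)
The statement is a global maximum principle argument applied to the function $v=W-W_0$, and the plan is to rule out a positive interior maximum. First I would record the three ingredients assembled just before the statement: on any domain free of critical points of $u$ the function $v$ is a subsolution of the elliptic operator $Lv=\De v+\langle\xi\,|\,\D v\rangle+\alpha v$ with $\alpha\le 0$ (here the hypothesis $m_0\le 0$ is used, since $\alpha$ has the sign of $m_0$); on $\pa M$ one has $v\le 0$; and $v\to 0$ at the conformal infinity. Setting $c=\sup_M v$, I would argue by contradiction, assuming $c>0$. Since $v\to 0$ near $\pa M_\infty$ and $v\le 0$ on $\pa M$, by continuity the superlevel set $\{v\ge c/2\}$ stays bounded away both from $\pa M$ and from infinity; hence it is compact and contained in $\mathring M$, and in particular the value $c$ is attained at some interior point $p_0$.

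The crucial point, and the only place where the critical points of $u$ must be confronted (it is exactly there that \eqref{eq:elliptic} fails), is the elementary observation that at any critical point $p$ one has $W(p)=|\D u|^2(p)=0$, whereas $W_0(p)\ge 0$ by construction, being the value of a squared gradient norm on a level set of the reference model. Thus $v(p)=-W_0(p)\le 0<c$ at every critical point, so the open set $U=\{v>c/2\}$ contains no critical point of $u$ whatsoever. Consequently the differential inequality \eqref{eq:elliptic} is valid on all of $U$.

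I would then let $U_0$ be the connected component of $U$ containing $p_0$. It is a bounded domain with $\overline{U_0}\subseteq\mathring M$, on whose boundary $v=c/2$. On $U_0$ the coefficients of $L$ are smooth, the zeroth order coefficient $\alpha$ is nonpositive, and $Lv\ge 0$, so the weak maximum principle for elliptic operators with nonpositive zeroth order term yields $\sup_{U_0}v\le\sup_{\pa U_0}v^+=c/2$. This contradicts $v(p_0)=c>c/2$, whence $c\le 0$, that is $W\le W_0$ on all of $M$.

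The main obstacle is precisely the mismatch between the domain of validity of \eqref{eq:elliptic} (away from critical points of $u$) and the wish to run a genuinely global comparison; the resolution is the remark above, that the sign of $v$ is automatically controlled at the critical set, so the relevant positive superlevel sets never meet it. A secondary technical point I would check is that $W_0=f(u)$ is a smooth function of $u$ up to the horizon, which follows from the strict monotonicity of $u_0$ in $r$ on the Kottler solution together with formula \eqref{eq:grad_kottler}; this guarantees both the continuity needed to attain the supremum and the regularity needed to invoke the maximum principle on $U_0$.
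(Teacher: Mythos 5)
Your proof is correct and takes essentially the same route as the paper, which simply applies the Maximum Principle to the elliptic inequality \eqref{eq:elliptic} with the boundary data $W\le W_0$ on $\pa M$ and the decay $W-W_0\to 0$ at infinity, deferring the details to Chru\'sciel--Simon and Lee--Neves. Your treatment of the critical set of $u$ --- noting that $v=W-W_0\le 0$ wherever $\D u=0$, so the positive superlevel sets on which you run the maximum principle never meet the region where \eqref{eq:elliptic} fails --- is exactly the point on which the cited arguments rest, so nothing is missing.
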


The details can be found in~\cite[Subsection~VII.B]{Chr_Sim} or~\cite[Corollary~4.2]{Lee_Nev}. 
Inequality~\eqref{eq:W_leq_Wo} has some interesting consequences, discussed in~\cite[Subsection~VII.B]{Chr_Sim} and~\cite[Section~4]{Lee_Nev}. An expansion of $W$ and $W_0$ near an horizon $S$ with maximum surface gravity yields a bound on the area of $S$ in terms of the area of the model solutions:
$$
\frac{\chi(S)}{|S|}\,\geq\,\frac{\chi(\pa M_0)}{|\pa M_0|_{g_0}}\,.
$$
Let us observe that we can compute $|\pa M_0|_{g_0}$ explicitly. In fact, we observe from~\eqref{eq:kottler} that $\pa M_0$ is endowed with the metric $r^2_{m_0}\, g_{-1}$, where $g_{-1}$ is the metric with constant sectional curvature $-1$.
As a consequence of the Gauss Bonnet Theorem, we obtain then:
$$
|\pa M_0|\,=\,r_{m_0}^2\int_{\pa M_0}d\sigma_{g_{-1}}\,=\,-\,r_{m_0}^2\,\int_{\pa M_0}\!\kappa_{g_{-1}}\,d\sigma_{g_{-1}}\,=\,-2\,\pi\,r_{m_0}^2\,\chi(\pa M_0)\,.
$$
Since $\chi(S)=-2\,[{\rm genus}(S)-1]$, the next result follows immediately:

\begin{proposition}
\label{pro:area_bound}
In the hypotheses of Theorem~\ref{thm:main}, it holds
$$
|S|\,\geq\,4\,\pi\,r_{m_0}^2\,\big[{\rm genus}(S)-1\big]\,.
$$
\end{proposition}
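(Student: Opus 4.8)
The plan is to read off the claim from the area inequality
$$
\frac{\chi(S)}{|S|}\,\geq\,\frac{\chi(\pa M_0)}{|\pa M_0|_{g_0}}
$$
recalled just above, after inserting the explicit value of the model area. So the first (and only genuinely substantial) task is to justify this inequality, and here I would rely on Proposition~\ref{pro:grad_est}. The idea is to use $u$ as a collar coordinate near the horizon $S$: since $S$ is totally geodesic with $|\D u|$ equal to the surface gravity $k>0$, the static equations~\eqref{eq:pb} pin down the leading terms in the expansion of $W=|\D u|^2$, of the second fundamental form, and of the Gauss curvature of the level sets $\{u=s\}$ as $s\to 0$. Combining these expansions with Gauss--Bonnet, $\int_{\{u=s\}}K\,d\sigma=2\pi\chi(S)$, expresses the limiting behaviour in terms of $\chi(S)/|S|$; carrying out the same computation on the Kottler model $(M_0,g_0,u_0)$ produces $\chi(\pa M_0)/|\pa M_0|_{g_0}$, and the pointwise comparison $W\leq W_0$ forces the inequality between the two limits. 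I would not redo these expansions, citing instead the computations of Chru\'sciel--Simon recalled in this section; this is the step that carries all the geometric weight.

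With the area inequality in hand, the rest is an explicit computation on the model. From~\eqref{eq:kottler} the horizon $\pa M_0$ carries the metric $r_{m_0}^2\,g_{-1}$, so $|\pa M_0|_{g_0}=r_{m_0}^2\,|\Sigma_{-1}|_{g_{-1}}$, and Gauss--Bonnet gives $|\pa M_0|_{g_0}=-2\pi\,r_{m_0}^2\,\chi(\pa M_0)$, as already computed above. The key observation is that substituting this into the area inequality makes the factor $\chi(\pa M_0)$ cancel, leaving the genus-independent bound
$$
\frac{\chi(S)}{|S|}\,\geq\,-\,\frac{1}{2\pi\,r_{m_0}^2}\,.
$$
This cancellation is what allows the model genus to play no role; one only needs $\chi(\pa M_0)\neq 0$, which holds since the model cross-section has genus $\geq 2$, so that $\chi(\pa M_0)<0$.

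Finally, I would substitute $\chi(S)=-2\,[{\rm genus}(S)-1]$ and clear denominators. Multiplying the last display through by the positive quantity $2\pi\,r_{m_0}^2\,|S|$ yields $-4\pi\,r_{m_0}^2\,[{\rm genus}(S)-1]\geq -|S|$, which upon changing sign is exactly $|S|\geq 4\pi\,r_{m_0}^2\,[{\rm genus}(S)-1]$. This closing algebra is routine; the only thing to keep track of is the sign of $\chi(\pa M_0)$, which is negative and is precisely what makes both the cancellation in the previous step and the direction of the final inequality come out correctly. Thus the entire difficulty is concentrated in establishing the area inequality, while the passage from it to the stated bound is immediate.
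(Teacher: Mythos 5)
Your proposal is correct and follows essentially the same route as the paper: it recalls the Chru\'sciel--Simon ratio inequality $\chi(S)/|S|\geq\chi(\pa M_0)/|\pa M_0|_{g_0}$ as a consequence of the expansions of $W$ and $W_0$ near the horizon (citing rather than reproving it, exactly as the paper does), computes $|\pa M_0|_{g_0}=-2\pi\,r_{m_0}^2\,\chi(\pa M_0)$ from the warped-product form~\eqref{eq:kottler} via Gauss--Bonnet, and substitutes $\chi(S)=-2\,[{\rm genus}(S)-1]$. Your explicit remark that $\chi(\pa M_0)$ cancels (legitimately, since $\chi(\pa M_0)<0$) is only a slight rephrasing of the paper's substitution, and the sign bookkeeping in the final step is right.
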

If instead one looks at the asymptotic expansions of $W$ and $W_0$, assuming again that $(M,g,u)$ is conformally compact and that the conformal infinity has constant curvature equal to $-1$,
then one shows that $\overline{m}\leq m_0$, where $\overline{m}$ is the supremum of the mass aspect. 
In~\cite[Subsection~2.1]{Lee_Nev}, Lee and Neves are able to combine the area bound on the horizon $S$, the bound $\overline{m}\leq m_0$ and their Penrose Inequality~\cite[Theorem~1.1]{Lee_Nev} to create a chain of inequalities, the former and latter terms of which are equal. Their uniqueness result then follows easily.

The arguments in the latter paragraph, involving the mass aspect, will not be relevant for what concerns our proof. Conversely, as we will see in the next sections, Propositions~\ref{pro:grad_est} and~\ref{pro:area_bound} will play a crucial role.

\section{Pseudo-radial function}
\label{sec:pr}
%
%
%
%
%
%

With {\em pseudo-radial function} we mean a function that mimics the behavior of the radial coordinate of a model solution.
In~\cite[Subsection~2.1]{Bor_Maz_2-II} such notion was introduced to study static spacetimes with positive cosmological constant, using the Schwarzschild--de Sitter spacetime as a model. 
Here we show that, with only minor modifications, we can define an analogous notion in the negative cosmological constant setting as well, using the Kottler solution~\eqref{eq:kottler} as a model.


Fix $m\in(-1/3\sqrt{3},0]$ and consider the function
$$
\begin{aligned}
&F_m :\,[0,+\infty)\,\times\,[r_m,+\infty)\,\longrightarrow\,\R
\\
&\,(u,\psi)\,\longmapsto\,u^2\,+\,1\,-\,\psi^2\,+\,\frac{2\,m}{\psi}\,.
\end{aligned}
$$
Notice that $\pa F_m/\pa\psi=-2(\psi^3+m)/\psi^2$ is strictly negative for all $\psi>\sqrt[3]{-m}$. In particular, since $r_m>\sqrt[3]{-m}$ (see Figure~\ref{fig:kottler}), we have $\pa F_m/\pa\psi\neq 0$ for all $m\in(-1/3\sqrt{3},0]$.
It follows from the Inverse Function Theorem that it is well defined a function $\psi_m:[0,\infty)\to [r_m,+\infty)$ such that $F_m(u,\psi_m(u))=0$, for all $u\in[0,+\infty)$. 
We now want to apply the function $\psi_m$ thus defined to the static potential. This is the content of the next definition.

\begin{definition}
Let $(M,g,u)$ be a solution to problem~\eqref{eq:pb} and let $m_0\leq 0$ be defined as in Section~\ref{sec:LN}. The {\em pseudo-radial function} is defined as
\begin{equation}
\label{eq:pr}
\begin{split}
\Psi:\,M\,\longrightarrow\,[r_{m_0},+\infty)
\\
p\,\longmapsto\,\psi_{m_0}\circ u(p)\,.\ \ \ 
\end{split}
\end{equation}
\end{definition}

From the definition of $\Psi$ and the fact that $F_{m_0}(u,\psi_{m_0}(u))=0$, we immediately obtain the following relation between $u$ and $\Psi$:
\begin{equation}
\label{eq:u_pr}
u^2\,=\,-\,1\,+\,\Psi^2\,-\,\frac{2\,m_0}{\Psi}\,.
\end{equation}
In particular, if $(M,g,u)$ is a Kottler solution~\eqref{eq:kottler} with $\kappa=-1$, then $\Psi$ coincides with the 
radial coordinate $r$. 
As a consequence, recalling~\eqref{eq:grad_kottler}, it is clear that we can write $W_0$ in terms of $\Psi$ as follows:
\begin{equation}
\label{eq:W0}
W_0\,=\,\left(\frac{\Psi^3+m_0}{\Psi^2}\right)^2\,.
\end{equation}
Finally, taking the derivative of both sides of identity~\eqref{eq:u_pr}, we obtain the following relation between the gradient of $\Psi$ and the gradient of $u$: 
\begin{equation}
\label{eq:grad_psi}
\D\Psi\,=\,\frac{u\,\Psi^2}{\Psi^3+m_0}\,\D u\,.
\end{equation}

\section{Proof of Theorem~\ref{thm:main}}
\label{sec:proof}
 
Let $(M,g,u)$ be a static solution with cosmological constant $\Lambda=-3$. 
As a consequence of the Divergence Theorem, on any compact domain $\Omega\subset M$ we have
\begin{align*}
\int_{\pa\Omega}\bigg\langle \frac{\D u}{\Psi^3+m_0}\,\bigg|\,\nu\bigg\rangle\,d\sigma
\,&=\,
\int_\Omega {\rm div}\left(\frac{\D u}{\Psi^3+m_0}\right)\,d\mu
\\
&=\,
\int_\Omega\left(\frac{\De u}{\Psi^3+m_0}\,-\,\frac{3\,\Psi^2}{(\Psi^3+m_0)^2}\,\langle\D\Psi\,|\,\D u\rangle\right)d\mu
\\
&=\,
\int_\Omega \frac{3\,u\,\Psi^4}{(\Psi^3+m_0)^3}\left[\frac{(\Psi^3+m_0)^2}{\Psi^4}\,-\,|\D u|^2\right]d\mu\,,
\end{align*}
where $\nu$ is the outward unit normal to $\pa \Omega$, and in the latter equality we have used $\De u=3\,u$ and~\eqref{eq:grad_psi}. Recalling the expression~\eqref{eq:W0} of $W_0$ in terms of $\Psi$, we observe that the quantity that appears in square brackets is exactly $W_0-W$. Therefore, for any choice of the domain $\Omega$, we have obtained
\begin{equation}
\label{eq:nonnegative_div}
\int_{\pa\Omega}\bigg\langle \frac{\D u}{\Psi^3+m_0}\,\bigg|\,\nu\bigg\rangle\,d\sigma
\,=\,
\int_\Omega \frac{3\,u\,\Psi^4}{(\Psi^3+m_0)^3}\left(W_0-W\right)d\mu\,\geq 0\,.
\end{equation}

This inequality will be crucial in the proof of the next proposition. Before stating it, let us recall some classical results on the behavior near the conformal infinity $(\pa M_\infty,\hat g)$ of a conformally compactifiable static solution $(M,g,u)$. 
It is well known that there exists a diffeomorphism between a collar of the conformal infinity and $[1,+\infty)\times \pa M_\infty$ such that the metric and the static potential can be written as
\begin{equation}
\label{eq:strong_expansions}
u\,=\,\rho\,+\,\frac{\kappa_{\hat g}}{2}\,\frac{1}{\rho}\,+\,\omega\,,\,\qquad g\,=\,\frac{d\rho\otimes d\rho}{\rho^2\,+\,\kappa_{\hat g}}\,+\,\rho^2\,\hat g\,+\,\eta\,,
\end{equation}
where $\rho$ is the coordinate on $[1,+\infty)$ and $\omega=o_1(\rho^{-1})$, $\eta_{ij}=o(1)$ as $\rho\to\infty$. With the notation $o_1$ we mean that the first derivative of $\omega$ scales suitably, namely, we want $\na\omega=o(\rho^{-2})$.
We also mention that it is actually possible to write much more refined expansions (see for instance the ones in~\cite[Proposition~2.2]{Lee_Nev} and~\cite[Proposition~III.7]{Chr_Sim}), however the ones in~\eqref{eq:strong_expansions} will be enough for our purposes. 

\begin{proposition}
\label{pro:mono}
In the hypotheses of Theorem~\ref{thm:main}, we have
\begin{equation}
\label{eq:mono}
|\pa M_\infty|_{\hat g}\,\geq\,4\,\pi\,\big[{\rm genus}(S)-1\big]\,,
\end{equation}
where $S$ is the horizon with maximum surface gravity. Furthermore, if the equality holds, then $W\equiv W_0$ on the whole $M$.
\end{proposition}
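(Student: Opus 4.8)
The plan is to apply the divergence identity~\eqref{eq:nonnegative_div} to the family of compact domains $\Om_R$ obtained by truncating $M$ at a large level $\{\rho=R\}$ of the coordinate $\rho$ appearing in the asymptotic expansions~\eqref{eq:strong_expansions}, so that $\pa\Om_R=\pa M\sqcup\{\rho=R\}$, and then to let $R\to+\infty$. Since $\Psi\ge r_{m_0}>\sqrt[3]{-m_0}$ gives $\Psi^3+m_0>0$ everywhere, the vector field $\D u/(\Psi^3+m_0)$ is globally smooth (in particular there is no issue at critical points of $u$, as $\Psi=\psi_{m_0}\circ u$ is smooth), so~\eqref{eq:nonnegative_div} is legitimate on every such $\Om_R$. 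By Proposition~\ref{pro:grad_est} the integrand on the right-hand side is nonnegative, so $\int_{\Om_R}\tfrac{3u\Psi^4}{(\Psi^3+m_0)^3}(W_0-W)\,d\mu$ is nondecreasing in $R$ and converges to $\int_M\tfrac{3u\Psi^4}{(\Psi^3+m_0)^3}(W_0-W)\,d\mu\ge 0$; the computation below will show that this limit is finite.

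First I would evaluate the flux across $\pa M$. On each horizon $u=0$, hence~\eqref{eq:u_pr} forces $\Psi=r_{m_0}$ there, while the outward unit normal of $\Om_R$ is $\nu=-\D u/|\D u|$ and $|\D u|$ equals the surface gravity $k_i$ of that component; thus the integrand equals $-k_i/(r_{m_0}^3+m_0)$ and the total horizon flux is $-\sum_i k_i|S_i|/(r_{m_0}^3+m_0)$, each summand being nonnegative after the sign change. Next I would compute the flux across $\{\rho=R\}$ using~\eqref{eq:strong_expansions} with $\kappa_{\hat g}=-1$: one has $\pa_\rho u=1+o(1)$ and $|\D\rho|=\sqrt{\rho^2-1}\,(1+o(1))$, so $\langle\D u\,|\,\nu\rangle=\rho\,(1+o(1))$; moreover~\eqref{eq:u_pr} gives $\Psi=\rho\,(1+o(1))$, whence $\Psi^3+m_0=\rho^3\,(1+o(1))$, while the induced area element is $\rho^2\,d\sigma_{\hat g}\,(1+o(1))$. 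Multiplying these factors, the integrand reduces to $(1+o(1))\,d\sigma_{\hat g}$, so the flux across $\{\rho=R\}$ tends to $|\pa M_\infty|_{\hat g}$ as $R\to+\infty$.

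Combining the two contributions in the limit yields
$$
|\pa M_\infty|_{\hat g}-\sum_i\frac{k_i\,|S_i|}{r_{m_0}^3+m_0}\,=\,\int_M\frac{3u\Psi^4}{(\Psi^3+m_0)^3}\,(W_0-W)\,d\mu\,\ge\,0\,.
$$
Dropping all horizons but $S$ (every term being nonnegative) and inserting the surface-gravity identity $k=(r_{m_0}^3+m_0)/r_{m_0}^2$, read off from~\eqref{eq:grad_kottler} at $r=r_{m_0}$, one obtains $|\pa M_\infty|_{\hat g}\ge k\,|S|/(r_{m_0}^3+m_0)=|S|/r_{m_0}^2$, and Proposition~\ref{pro:area_bound} then gives~\eqref{eq:mono}. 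For the rigidity statement, if~\eqref{eq:mono} is saturated, then the displayed identity forces $\int_M\tfrac{3u\Psi^4}{(\Psi^3+m_0)^3}(W_0-W)\,d\mu=0$; since the integrand is nonnegative with $u>0$ in $\mathring M$ and $\Psi^3+m_0>0$, this yields $W\equiv W_0$ on the whole $M$.

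I expect the main obstacle to be the flux computation across $\{\rho=R\}$: one must verify that the remainders carried by $\omega=o_1(\rho^{-1})$ and $\eta=o(1)$ in~\eqref{eq:strong_expansions} are genuinely negligible against the leading $\rho^3$ scaling of $\Psi^3+m_0$ and the $\rho^2$ scaling of the area element, so that the limit is exactly $|\pa M_\infty|_{\hat g}$ with coefficient one and no spurious finite remainder. The remaining steps are purely algebraic, relying only on $\Psi^3+m_0>0$ together with the already established Propositions~\ref{pro:grad_est} and~\ref{pro:area_bound}.
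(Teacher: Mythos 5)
Your proof is correct and follows essentially the same route as the paper: both apply the divergence identity~\eqref{eq:nonnegative_div} to the truncated domain bounded by $\pa M$ and $\{\rho=R\}$, evaluate the horizon flux via $\Psi=r_{m_0}$ and the surface gravity, compute the flux at infinity from~\eqref{eq:strong_expansions} to obtain $|\pa M_\infty|_{\hat g}$, and conclude with Proposition~\ref{pro:area_bound} plus the vanishing of the bulk integral in the equality case. The only cosmetic difference is that you write the horizon contribution as an explicit sum $\sum_i k_i|S_i|/(r_{m_0}^3+m_0)$ while the paper phrases it as $r_{m_0}^{-2}\int_{\pa M}\sqrt{W/W_0}\,d\mu$ before discarding the horizons other than $S$ --- these are the same estimate.
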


\begin{proof}
Let $\rho$ be the coordinate introduced above in a collar of infinity and let us apply formula~\eqref{eq:nonnegative_div} to the domain $\Omega=M\setminus\{\rho>R\}$, for some large $R>0$. Notice that
$$
\pa\Omega\,=\,\pa M\,\sqcup\,\{\rho=R\}\,,
$$
and that, on $\pa M$, the unit normal $\nu$ is equal to $-\D u/|\D u|$.
Let now $m_0\leq 0$ be the mass of the Kottler solution we are comparing with, and let $\Psi$ be the pseudo-radial function defined as in~\eqref{eq:pr}.
From the definition, it is clear that $\Psi=r_{m_0}$ on $\pa M$, hence, recalling~\eqref{eq:W0}, on $\pa M$ we have $\sqrt{W_0}=(r_{m_0}^3+m_0)/r_{m_0}^2$. It follows
$$
\bigg\langle \frac{\D u}{\Psi^3+m_0}\,\bigg|\,\nu\bigg\rangle\,=\,-\,\frac{|\D u|}{r_{m_0}^3+m_0}\,=\,-\,\sqrt{\frac{W}{W_0}}\,r_{m_0}^{-2}\,.
$$
Recalling that $W=W_0$ on the horizon $S$ with maximum surface gravity, formula~\eqref{eq:nonnegative_div} gives us
$$
\int_{\{\rho=R\}}\bigg\langle \frac{\D u}{\Psi^3+m_0}\,\bigg|\,\nu\bigg\rangle\,d\sigma\,\geq\,
r_{m_0}^{-2}\int_{\pa M}\sqrt{\frac{W}{W_0}}\,d\mu
\,\geq\,
r_{m_0}^{-2}\,|S|
\,,
$$
Furthermore, it is clear from~\eqref{eq:nonnegative_div} that, if the former inequality is an equality, then $W\equiv W_0$ on the whole $\Omega$.
Taking the limit of the left hand side as $R\to \infty$ and using Proposition~\ref{pro:area_bound}, we have proven
\begin{equation}
\label{eq:int_id_aux}
\lim_{R\to \infty}\int_{\{\rho=R\}}\bigg\langle \frac{\D u}{\Psi^3+m_0}\,\bigg|\,\nu\bigg\rangle\,d\sigma\,\geq\,
4\,\pi\,\big[{\rm genus}(S)-1\big]
\,.
\end{equation}

It remains to compute the limit on the left hand side of~\eqref{eq:int_id_aux}. 
Remembering~\eqref{eq:u_pr}, we have that $\Psi$ approaches $\rho$ at infinity, hence on $\{\rho=R\}$ it holds
$$
\frac{1}{\Psi^3+m_0}\,=\,\frac{1}{R^3}+o(R^{-3})\,.
$$
Furthermore, from~\eqref{eq:strong_expansions} and the fact that $\kappa_{\hat g}=-1$ by hypothesis, we have $|\D \rho|^2=\rho^2-1$ in the whole collar of infinity, hence the normal to $\{\rho=R\}$ satisfies
$$
\nu\,=\,\frac{\D \rho}{|\D \rho|}\,=\,\frac{1}{\sqrt{R^2-1}}\,\frac{\pa}{\pa\rho}\,.
$$ 
As a consequence
$$
\langle\D u\,|\,\nu\rangle\,=\,{\sqrt{R^2-1}}\,\frac{\pa u}{\pa\rho}\,=\,{\sqrt{R^2-1}}\,\left[1+\frac{1}{2\,R^2}+o(R^{-2})\right]\frac{\pa}{\pa\rho}\,=\,\left[R\,+\,o(R)\right]\frac{\pa}{\pa\rho}\,.
$$
Finally, from~\eqref{eq:strong_expansions} we also deduce
$$
d\sigma\,=\,\left[R^2+o(R^2)\right]\,d\sigma_{\hat g}\,.
$$
Putting all these pieces of information together, we get
\begin{equation}
\label{eq:limit_div}
\lim_{R\to \infty}\int_{\{\rho=R\}}\bigg\langle \frac{\D u}{\Psi^3+m_0}\,\bigg|\,\nu\bigg\rangle\,d\sigma\,=\,
\lim_{R\to\infty}\int_{\{\rho=R\}}d\sigma_{\hat g}\,=\,
\int_{\pa M_\infty}\,d\sigma_{\hat g}\,=\,|\pa M_\infty|_{\hat g}\,.
\end{equation}
This proves~\eqref{eq:mono}. Furthermore, notice that, if the equality holds, then formula~\eqref{eq:nonnegative_div} tells us that $W-W_0\equiv 0$ on $\Omega_\ep$ for all $\ep$, hence $W\equiv W_0$ on the whole $M$.
\end{proof}

We are finally ready to prove the main result of the paper, namely Theorem~\ref{thm:main}, that we rewrite here for the reader's convenience.

\begin{theorem}
\label{thm:main_rewr}
Let $(M,g,u)$ be a $\mathscr{C}^3$-conformally compactifiable static solution with cosmological constant $\Lambda=-3$ and suppose that the conformal infinity $(\pa M_\infty,\hat g)$ has constant sectional curvature equal to $-1$. Let $S$ be an horizon with maximum surface gravity $k$, and suppose that $0<k\leq 1$. If 
$$
{\rm genus}(\pa M_\infty)\,\leq\,{\rm genus}(S)\,,
$$
then $(M,g,u)$ is isometric to a Kottler solution~\eqref{eq:kottler} with negative mass.
\end{theorem}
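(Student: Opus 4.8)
The plan is to use the three preliminary propositions together with the Gauss--Bonnet theorem to force the equality case of Proposition~\ref{pro:mono}, and then to extract the full rigidity from the resulting pointwise identity $W\equiv W_0$. First I would compute the area of the conformal infinity. Since $(\pa M_\infty,\hat g)$ has constant curvature $-1$, the Gauss--Bonnet theorem (exactly as in the computation of $|\pa M_0|$ carried out in Section~\ref{sec:LN}) gives
$$
|\pa M_\infty|_{\hat g}\,=\,-\int_{\pa M_\infty}\!\kappa_{\hat g}\,d\sigma_{\hat g}\,=\,-2\pi\,\chi(\pa M_\infty)\,=\,4\pi\big[{\rm genus}(\pa M_\infty)-1\big]\,.
$$
Substituting this into the inequality \eqref{eq:mono} of Proposition~\ref{pro:mono} yields ${\rm genus}(\pa M_\infty)\ge{\rm genus}(S)$, which, combined with the standing hypothesis ${\rm genus}(\pa M_\infty)\le{\rm genus}(S)$, forces the two genera to coincide. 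Hence \eqref{eq:mono} holds with equality, and the rigidity clause of Proposition~\ref{pro:mono} gives $W\equiv W_0$ on the whole $M$.

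Next I would read the geometry off the identity $W\equiv W_0$. By \eqref{eq:grad_psi} and \eqref{eq:W0}, the equality $W=W_0$ is equivalent to $|\D\Psi|^2=u^2=-1+\Psi^2-2m_0/\Psi$, so $|\D\Psi|$ is a function of $\Psi$ alone and, since the right-hand side is strictly positive in the interior, $\Psi$ has no critical points in $\mathring M$. The level sets of $\Psi$ (equivalently of $u$) then foliate $M$ by closed surfaces. Applying the Gauss lemma to the $g$-distance $s=\int d\Psi/\sqrt{-1+\Psi^2-2m_0/\Psi}$, which satisfies $|\D s|=1$, puts the metric in the warped form
$$
g\,=\,\frac{d\Psi\otimes d\Psi}{-1+\Psi^2-2m_0/\Psi}\,+\,g_\Psi
$$
with no cross terms, where $g_\Psi$ denotes the induced metric on the leaf $\{\Psi=\text{const}\}$. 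It then remains to identify $g_\Psi$ with $\Psi^2\hat g$ for a fixed metric $\hat g$ of constant curvature, which is precisely what upgrades this foliation to the explicit Kottler ansatz \eqref{eq:kottler} with mass $m_0$.

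The hard part is this last identification, i.e.\ proving that the leaves are totally umbilic with constant Gauss curvature. The static equation $u\,\Ric=\DD u-3u\,g$ shows that $\D u$ is a Ricci eigenvector and that the second fundamental form of a leaf, relative to $\nu=\D u/|\D u|$, equals $\mathrm{II}(X,Y)=\big(u\,\Ric(X,Y)+3u\,g(X,Y)\big)/|\D u|$ on tangent vectors; together with the constraint $\Ro=-6$, umbilicity reduces to the statement that the tangential Ricci tensor is a multiple of the induced metric. I expect to obtain this by revisiting the Beig--Simon/Chru\'sciel--Simon computation behind the elliptic inequality \eqref{eq:elliptic}: that inequality is obtained by discarding a nonnegative remainder term, and the true identity forces this remainder to vanish once $W-W_0\equiv0$; the remainder should be (a multiple of) the squared norm of the traceless part of $\mathrm{II}$, so its vanishing is exactly the umbilicity sought. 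Once the leaves are umbilic with constant mean curvature, the evolution equation $\pa_s g_\Psi=2\,\mathrm{II}$ forces them to be homothetic with warping factor $\Psi^2$, while the Gauss equation and $\Ro=-6$ pin down their intrinsic curvature to a constant (negative, by the genus), so the metric is isometric to the Kottler solution \eqref{eq:kottler} with mass $m_0$. Finally, since the normalization in Section~\ref{sec:LN} makes $0<k\le1$ correspond to $m_0\le0$ (with $k=1$ exactly when $m_0=0$), the mass is as claimed.
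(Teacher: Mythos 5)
Your proposal is correct in outline and coincides with the paper's proof in its first half: the Gauss--Bonnet computation $|\pa M_\infty|_{\hat g}=4\pi[{\rm genus}(\pa M_\infty)-1]$, the forcing of equality in \eqref{eq:mono} via the genus hypothesis, the rigidity clause of Proposition~\ref{pro:mono} giving $W\equiv W_0$, and the observation that $u$ (equivalently $\Psi$, by \eqref{eq:grad_psi} and \eqref{eq:W0}) then has no critical points and yields a foliation with metric of the form \eqref{eq:almost_warp_prod} are all exactly the paper's steps. Where you genuinely diverge is the umbilicity step, which you correctly identify as the crux. The paper does \emph{not} go back to the elliptic inequality \eqref{eq:elliptic}: it exploits the fact that $W$ is now a function of $u$ to run a self-contained Bochner computation \eqref{eq:boch} in the coordinates $(u,\vartheta^1,\vartheta^2)$, obtaining the identity \eqref{eq:traceless_hess} which expresses $|\mathring{\hhh}|^2$ as a universal function of $u$ alone; since the Kottler model satisfies the same identity with $\mathring{\hhh}\equiv 0$, that function vanishes identically, whence $\mathring{\hhh}\equiv 0$ on $M$. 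Your alternative --- that \eqref{eq:elliptic} arises from an exact divergence-type identity by discarding a nonnegative remainder, which must vanish once $W-W_0\equiv 0$, and that this remainder contains a positive multiple of the squared traceless second fundamental form --- is in fact consistent with the Robinson-type structure of the Beig--Simon/Chru\'sciel--Simon computation (the remainder also carries a tangential-gradient term of $W$, which vanishes automatically here because $W=W_0(u)$, and since $W\equiv W_0>0$ everywhere the identity holds globally, with no critical-set issues). So your route can be made rigorous, but as written it is the one incomplete step (``I expect'', ``should be''): you would need to actually exhibit the remainder from~\cite{Chr_Sim} or~\cite[Lemma~4.1]{Lee_Nev}, whereas the paper's computation is self-contained, and its ``evaluate the universal function of $u$ on the model'' trick spares the cumbersome direct verification that the right-hand side of \eqref{eq:traceless_hess} vanishes. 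For the endgame, you re-derive the warped-product structure and the constancy of the leaf curvature by hand ($\pa_s g_\Psi=\Ho\,g_\Psi$ with $\Ho$ a function of $u$, plus the Gauss equation), while the paper gets $\pa_u g_{ij}=f(u)g_{ij}$ directly from $\mathring{\hhh}\equiv 0$ and then simply cites the known classification of conformally compactifiable warped-product solutions (\cite{Kobayashi}); your version is more self-contained modulo details, the paper's is shorter and cleaner.
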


\begin{proof}
Under the hypothesis that the sectional curvature $\kappa_{\hat g}$ is constant and equal to $-1$ on $\pa M_\infty$, we can use the Gauss-Bonnet Theorem to write 
\begin{equation}
\label{eq:lim_theo}
|\pa M_\infty|_{\hat g}\,=\,-\int_{\pa M_\infty}\!\!\kappa_{\hat g}\,d\sigma_{\hat g}\,=\,-2\pi\chi(\pa M_\infty)\,=\,4\pi\left[{\rm genus}(\pa M_\infty)-1\right]\,\leq\,4\pi\left[{\rm genus}(S)-1\right]\,.
\end{equation} 
As a consequence, the equality holds in~\eqref{eq:mono}, so the rigidity statement of Proposition~\ref{pro:mono} applies, telling us that  
$$
W\equiv W_0 \quad\hbox{in $M$\,.}
$$
In particular, $|\D u|^2=W$ is a function of $u$, and it is strictly positive because $W_0\neq 0$ on the whole $M$ by definition. Since $u$ has no critical points, we can use it as a coordinate. Considering local coordinates $\{u,\vartheta^1,\vartheta^2\}$, the metric $g$ rewrites as
\begin{equation}
\label{eq:almost_warp_prod}
g\,=\,\frac{d u\otimes du}{W}\,+\,g_{ij}(u,\vartheta^1,\vartheta^2)d\vartheta^i\otimes d\vartheta^j\,,
\end{equation}
where the indices $i,j$ take only the values $1$ and $2$.
With respect to the normal $\D u/|\D u|$, the second fundamental form $\hhh$ and the mean curvature $\HHH$ of a level sets of $u$ can be computed by 
$$
\hhh_{ij}\,=\,\frac{\D^2_{ij} u}{|\D u|}\,,\qquad \HHH\,=\,\frac{\De u\,-\,\D^2_{uu} u}{|\D u|}\,.
$$
Let us now study the hessian of $u$. First of all, we directly compute
$$
\D^2_{\a\b} u\,=\,\pa^2_{\a\b}u\,-\,\Gamma_{\a\b}^\gamma\pa_\gamma u\,=\,-\Gamma_{\a\b}^u\,=\,\begin{dcases}
\frac{W'}{2 W} & \hbox{if $\a=\b=u$}\,,
\\
-\frac{W}{2}\pa_u g_{\a\b}              & \hbox{if both $\a\neq u$ and $\b\neq u$}\,,
\\             
0              & \hbox{otherwise}\,,
\end{dcases}
$$
where $W'$ represents the derivative of $W$ with respect to $u$. More precisely, $W':M\to\R$ is the function such that $\D W=W'\D u$.
From the Bochner Formula, using the above expressions for the components of the hessian and the static equations~\eqref{eq:pb}, we then compute
\begin{align}
\notag
\De W\,&=\,2\,|\DD u|^2\,+\,2\,\Ric(\D u,\D u)\,+\,2\,\langle\D\De u\,|\,\D u\rangle
\\
\notag
&=\,2\,(\D^2_{uu} u)^2\,+\,2\,|\D u|^2\,|\hhh|^2\,+\,\frac{2}{u}\,\DD u(\D u,\D u)
\\
\notag
&=\,2\,\frac{(W')^2}{4W^2}\,+\,2\,W\,|\mathring{\hhh}|^2\,+\,W\,\HHH^2\,+\,\frac{1}{u}\,\langle\D W\,|\,\D u\rangle
\\
\label{eq:boch}
&=\,\frac{(W')^2}{2W^2}\,+\,2\,W\,|\mathring{\hhh}|^2\,+\,\left(3 u\,-\,\frac{W'}{2 W}\right)^2\,+\,\frac{1}{u}\,W\,W'\,,
\end{align}
where we have denoted by $\mathring\hhh_{ij}=\hhh_{ij}-(\HHH/2)g_{ij}$ the traceless part of the second fundamental form $\hhh$.
On the other hand, notice that $\De W={\rm div}(W'\D u)=W''|\D u|^2+W'\De u=WW''+3uW'$. Plugging this information inside~\eqref{eq:boch}, we obtain
\begin{equation}
\label{eq:traceless_hess}
|\mathring{\hhh}|^2\,=\,\frac{1}{2}\,W''\,+\,\frac{3}{2}\,u\,\frac{W'}{W}\,-\,\frac{(W')^2}{4W^3}\,-\,\frac{1}{2W}\left(3 u\,-\,\frac{W'}{2 W}\right)^2\,-\,\frac{1}{2u}\,W'\,.
\end{equation}
It follows that the quantity $|\mathring{\hhh}|^2$ is a function of $u$. On the other hand, notice that the Kottler solution with nonpositive mass clearly satisfies the hypotheses of Theorem~\ref{thm:main_rewr}, hence
formula~\eqref{eq:traceless_hess} must hold in particular for the Kottler solution, in which case it is clear that $\mathring{\hhh}$ vanishes pointwise because of the warped product structure. Therefore, the function on the right hand side of~\eqref{eq:traceless_hess} must be zero for all values of $u$ (one can also show this by a direct, albeit rather cumbersome, calculation). We have deduced that $\mathring{\hhh}\equiv 0$, which implies that
$$
0\,=\,|\D u|\,\mathring{\hhh}_{ij}\,=\,\D^2_{ij} u\,-\,\frac{\De u-\D^2_{uu} u}{2}\,g_{ij}\,=\,-\,\frac{W}{2}\,\pa_u g_{ij}\,-\,\frac{6\,u\,-\,W'/W}{4}\,g_{ij}\,.
$$
In particular, we have $\pa_u g_{ij}=f(u)g_{ij}$ for some function $f$, which gives $g_{ij}=e^{F(u)}\lambda_{ij}$, where $F$ is a primitive of $f$ and the coefficients $\lambda_{ij}$ depend only on the coordinates $\vartheta^1,\vartheta^2$. It follows that the metric~\eqref{eq:almost_warp_prod} is a warped product. Since it is well known that the only conformally compactifiable warped product solutions to~\eqref{eq:pb} are the Kottler solutions (see for instance~\cite{Kobayashi} or~\cite[Section~2.2]{Bor-thesis}), this concludes the proof.
\end{proof}

\section{Further comments and generalizations}
\label{sec:future}

In this last section we are going to comment on our proof and on possible future developments and generalizations. 

\subsection{Degenerate horizons}
\label{sub:degenerate} 

In this paper, we have always assumed that the static equations~\eqref{eq:pb} extend to the boundary, in the sense that the metric $g$ is well defined on $\pa M$ and if we take the limits of both the left hand side
and the right hand side of the equations in~\eqref{eq:pb}, they coincide on $\pa M$. If one drops this hypothesis, then the surface gravity of the horizons
is no longer necessarily strictly positive. We can then talk about {\em degenerate horizons}, that is, connected components of $\pa M$ with $|\D u|\equiv 0$.
One can show that the vanishing of the surface gravity implies that geodesics in $M$ do not reach $\pa M$ in a finite time (see for instance~\cite[Lemma~3]{Khu_Woo}). For this reason, degenerate horizons should not be thought as boundary components, but as ends of the manifold along which the static potential $u$ goes to zero. An example of a solution with a degenerate horizon is the so called {\em critical Kottler solution}, that is, the Kottler solution~\eqref{eq:kottler} with $\kappa=-1$ and $m=\mcrit$, where we have set $\mcrit=-1/(3\sqrt{3})$.

In this subsection, we argue that Theorem~\ref{thm:main} remains true for degenerate horizons as well. Let $(M,g,u)$ be a $\mathscr{C}^3$-conformally compactifiable solution to problem~\eqref{eq:pb} whose conformal infinity $(\pa M_\infty,\hat g)$ has constant sectional curvature equal to $-1$. We want to show that, {\em if all the components of $\pa M$ are degenerate horizons and ${\rm genus}(\pa M_\infty)\leq{\rm genus}(S)$ for some horizon $S$, then $(M,g,u)$ is isometric to the critical Kottler solution.}

The proof follows exactly the same steps as the nondegenerate case. We compare the triple $(M,g,u)$ with the critical Kottler solution $(M_0,g_0,u_0)$ with mass $m_0=\mcrit$ and we define the functions $W$ and $W_0$ as before. Notice that both $W$ and $W_0$ go to zero when we approach the degenerate horizons. It follows immediately that Proposition~\ref{pro:grad_est} is in force in the degenerate case as well, meaning that $W\leq W_0$ on the whole $M$. Concerning Proposition~\ref{pro:area_bound}, since the metric does not extend to a degenerate horizon, we cannot talk about the area of $S$. It is convenient to replace the quantity $|S|$ appearing in the statement of the proposition with the limit of the area of suitable slices of a collar of $S$. 
In~\cite{Chr_Rea_Tod} it is shown that one can consider coordinates $(s,x^1,x^2)$ (the restriction of the Gaussian null coordinates~\cite{Mon_Ise} to our spatial slice) such that the slices $S_\ep=\{s=\ep\}$ approach the horizon $S$ as $\ep\to 0$ and the metric $g_\ep$ induced by $g$ on $S_\ep$ converges to a metric on $S$ with constant sectional curvature equal to $-3$ as $\ep\to 0$. As a consequence, we compute
$$
\lim_{\ep\to 0}|S_\ep|\,=\,\lim_{\ep\to 0}\int_{S_\ep}d\sigma_{g_\ep}\,=\,-\frac{2}{3}\,\pi\,\chi(S)\,=\,4\,\pi\,r^2_{\mcrit}\big[{\rm genus}(S)-1\big]\,.
$$
Here $r_{\mcrit}=1/\sqrt{3}$ is the radius of the degenerate horizon of the critical Kottler solution.
This proves that Proposition~\ref{pro:area_bound} holds (with equality) if we replace $|S|$ with $\lim_{\ep\to 0}|S_\ep|$.

The last modification that we need to make is in the choice of the domain $\Omega$ in Proposition~\ref{pro:mono}, as we need to remove a collar of every degenerate horizon in order to make $\Omega$ compact. We achieve this by considering Gaussian null coordinates $\{s_i,x_i^1,x_i^2\}$ on  any connected component $S_i$ of $\pa M$ and then defining the domain via
$$
\Omega\,\,=\,\,M\setminus\left(\bigcup_{i} \{s_i\leq\ep\}\cup\{\rho\geq R\}\right)\,,
$$
where $\rho$ is the coordinate on a collar of infinity used in~\eqref{eq:strong_expansions}. Proposition~\ref{pro:mono} is then proved by applying formula~\eqref{eq:nonnegative_div} to this domain $\Omega$ and then taking the limit as $\ep\to 0$ and $R\to\infty$.
The remainder of the proof works without the need of any further changes.

\subsection{Further remarks on the proof}
\label{sub:relax_hyp} 

We emphasize that the strong hypotheses of conformal compactifiability and constant sectional curvature of the conformal infinity were not so pervasively exploited in our proof. For instance, the mass aspect did not play any role. In fact, we only had to look at the asymptotics of $u$ three times:
\begin{enumerate}
\item[$(i)$] to study the asymptotic behavior of $W-W_0$, in order for Proposition~\ref{pro:grad_est} to hold,

\smallskip

\item[$(ii)$] in formula~\eqref{eq:limit_div}, in order to compute
$$
\lim_{R\to \infty}\,\,\int_{\{\rho=R\}}\bigg\langle \frac{\D u}{\Psi^3+m}\,\bigg|\,\nu\bigg\rangle\,d\sigma_{\hat g_r}\,=\,|\pa M_\infty|_{\hat g}\,,
$$

\smallskip

\item[$(iii)$] in formula~\eqref{eq:lim_theo}, namely to write
$$
|\pa M_\infty|_{\hat g}\,=\,-\int_{\pa M_\infty}\!\!\kappa_{\hat g}\,d\sigma_{\hat g}\,.
$$
\end{enumerate}
It is worth remarking that these properties independently work under weaker conditions on the asymptotics. Under the only assumption of conformal compactifiability, from expansions~\eqref{eq:strong_expansions},
%
proceeding as in the proof of Proposition~\ref{pro:mono}, it is immediate to show that formula~\eqref{eq:limit_div} is in force. 
In other words, point $(ii)$ above works under the sole assumption of conformal compactifiability. Concerning point $(i)$, let us observe that, in order for Proposition~\ref{pro:grad_est} to hold, it is sufficient to show that $W-W_0$ has a nonpositive limit at infinity.
On the other hand, it is not hard to employ~\eqref{eq:strong_expansions} to compute the following estimate as $\rho\to\infty$
\begin{equation}
\label{eq:WminusWo_est}
W\,-\,W_0\,=\,-\,\kappa_{\hat g}\,-\,1\,+\,o(1)\,.
\end{equation}
It follows that, in order for Proposition~\ref{pro:grad_est} to hold, it is sufficient to have $\kappa_{\hat g}\geq -1$ pointwise. Finally, notice that points~$(i)$ and~$(ii)$ are enough to prove Proposition~\ref{pro:mono}. 
It follows that {\em Proposition~\ref{pro:mono}, and in particular the inequality
\begin{equation}
\label{eq:mono2}
|\pa M_\infty|_{\hat g}\,\geq\,4\,\pi\,\big[{\rm genus}(S)-1\big]\,,
\end{equation}
remain true under the weaker hypothesis $\kappa_{\hat g}\geq -1$.}
On the other hand, the bound $\kappa_{\hat g}\geq -1$ immediately implies
\begin{equation}
\label{eq:lim_theo2}
|\pa M_\infty|_{\hat g}\,=\,\int_{\pa M_\infty}\!\!d\sigma_{\hat g}\,\geq\,-\int_{\pa M_\infty}\!\!\kappa_{\hat g}\,d\sigma_{\hat g}
\,=\,4\,\pi\,\big[{\rm genus}(\pa M_\infty)-1\big]\,.
\end{equation}
Unfortunately, it is not possible to combine~\eqref{eq:mono2} and~\eqref{eq:lim_theo2} in order to obtain Theorem~\ref{thm:main}, unless one requires $\kappa_{\hat g}\equiv -1$ pointwise.

Concerning the assumption about the maximum surface gravity $k$ being less than or equal to $1$, this has been used only once but in a crucial point, namely in the proof of the gradient estimate $W\leq W_0$. In fact, the hypothesis $k\leq 1$ grants us that the Kottler solution $(M_0,g_0,u_0)$ we are comparing with has mass $m_0\leq 0$, which in turn implies that the coefficient $\alpha$ appearing in the elliptic inequality~\eqref{eq:elliptic} is nonpositive. The nonpositivity of the $0$-th order term of the elliptic inequality~\eqref{eq:elliptic} is needed in order to be able to apply the Maximum Principle, leading to the proof of Proposition~\ref{pro:grad_est}. If it were possible to prove that $W\leq W_0$ by other means (for instance if it were possible to find a different elliptic inequality whose $0$-th order term is nonpositive without assumptions on the sign of $m_0$), then Theorem~\ref{thm:main} would work without the hypothesis on the bound on the surface gravity.

\subsection{Other generalizations}
\label{sub:generalizations} 

It is natural to ask if our proof can be adapted to study Kottler solutions whose slices have nonnegative sectional curvature. In particular, it would certainly be interesting to find a uniqueness result in the spirit of Theorem~\ref{thm:main} for the Kottler solution~\eqref{eq:kottler} with $\kappa=1$, usually referred to as the Schwarzschild--Anti de Sitter solution. The crucial problem seems to be again that of proving that the estimate $W\leq W_0$ is in force. As already mentioned at the end of the previous subsection, in order for the $0$-th order term of the elliptic inequality~\eqref{eq:elliptic} to have the right sign, one needs nonpositive masses, whereas for Kottler solutions with $\kappa\geq 0$, only parameters $m>0$ are allowed.

We conclude by briefly addressing the higher dimensional case. It is not hard to adapt our arguments to study static solutions $(M^n,g,u)$ of any dimension $n\geq 3$. At the cost of more cumbersome computations, one can still prove that $W-W_0$ satisfies an elliptic inequality, leading to the gradient estimate $W\leq W_0$ when $m_0\leq 0$. We do not give the details, but the interested reader can essentially retrace the steps in~\cite{Bor_Maz_2-II}: in that paper, the case of a positive cosmological constant in any dimension is studied, and a gradient estimate analogous to Proposition~\ref{pro:grad_est} is proven (\cite[Proposition~3.3]{Bor_Maz_2-II}). Once the gradient estimate is achieved, it is easy to adjust the remaining arguments in Proposition~\ref{pro:mono} to show the inequality
\begin{equation}
\label{eq:area_bound_n}
r_m^{n-1}|\pa M_\infty|_{\hat g}\,\geq\,|S|\,.
\end{equation}
In dimension $n=3$, one can exploit the Gauss-Bonnet formula to relate the areas and genuses of $S$ and $\pa M_\infty$ (this has been done in Proposition~\ref{pro:area_bound} and formula~\eqref{eq:lim_theo}). Unfortunately, this is not possible when $n>3$, and we are left without a clear way to improve on~\eqref{eq:area_bound_n}.

\bibliographystyle{plain}
\bibliography{biblio}

\end{document}